\newcommand{\mc}{\mathcal}
\newcommand{\pt}{\partial}
\newcommand{\br}{\mathbb{R}}
\newcommand{\te}{\theta}
\newcommand{\e}{\varepsilon}
\renewcommand{\r}{\rho}
\renewcommand{\(}{\left(}
\renewcommand{\)}{\right)}
\renewcommand{\[}{\left[}
\renewcommand{\]}{\right]}
\newtheorem{thm}{Theorem}
\newtheorem{prop}[thm]{Proposition}
\def\be{\begin{equation}}
\def\ee{\end{equation}}
\def\bea{\begin{eqnarray}}
\def\eea{\end{eqnarray}}
\newcommand{\Div}{\operatorname{div}}
\newcommand{\Hess}{\operatorname{Hess}}
\def \La {\big\langle}
\def \Ra {\big\rangle}
\numberwithin{thm}{section}
\numberwithin{equation}{section}
\title{Asymptotic analysis for a very fast diffusion equation arising from the 1D quantization problem}
\begin{document}

\author{
Mikaela Iacobelli
  \thanks{University of Cambridge, DPMMS Centre for Mathematical Sciences, Wilberforce road, Cambridge CB3 0WB, UK. Email: \textsf{iacobelli@maths.cam.ac.uk}}
}

\maketitle

\begin{abstract}
In this paper we study the asymptotic behavior of a very fast diffusion PDE in 1D with periodic boundary conditions.
This equation is motivated by the gradient flow approach to the problem of quantization of measures introduced in \cite{CGI1}.
We prove exponential convergence to equilibrium under minimal assumptions on the data, and we also provide sufficient conditions
for $W_2$-stability of solutions.
\end{abstract}

\section{Introduction}

During the last years, asymptotic analysis for solutions of nonlinear parabolic equations have attracted a lot of attention, also in connection with gradient flows and entropy methods.

The aim of the present paper is to investigate the dynamics of the PDE
\be\label{eq:vfd}
\pt_tf(t,x)=-r\pt_x\bigg(f(t,x)\pt_x\Big(\frac{\r(x)}{f(t,x)^{r+1}}\Big) \bigg)\qquad \text{in }(0,\infty)\times [0,1],
\ee
where $r>1,$ and $\rho>0$ and $f(t, \cdot)$ are probability densities on $[0,1]$ with periodic boundary conditions. 
When $\rho=1$, this equation takes the form
\be\label{eq:vfd1}
\pt_tf=-(r+1)\pt_x^2\big(f^{-r}\big),
\ee
which belongs to the general class of fast diffusion equations
$$
\pt_t u={\rm div}(u^{m-1}\nabla u),\qquad m<1.
$$
We recall that, when the problem is set on the whole space $\br^n$, the value of $m$ plays a crucial role: solutions are smooth and exist for all times if $m>m_c:=(n-2)/2$,
while they vanish in finite time if $m \leq m_c$ (the existence of such an extinction time motivates the name ``very fast diffusion equations'').
There is a huge literature on the subject, and we refer the interested reader to the monograph \cite{Va2} for a comprehensive overview and more references.

Our case corresponds to the range $m=-r<-1$. 
It is interesting to point out that \eqref{eq:vfd1} set on the whole space $\mathbb R$ or with zero Dirichlet boundary conditions has no solutions, since all the mass instantaneously disappear \cite[Theorem 3.1]{Vazquez} (see also \cite{E,RV,Vazquez_1} for related results).
It is therefore crucial that in our setting the equation has periodic boundary conditions,
so that the mass is preserved

We observe that this kind of equations has the property of diffusing extremely fast. In particular, if $f_0$ is a non-negative and bounded initial datum,
the solution becomes instantaneously positive. As we are only interested in the long time behaviour of solutions, to simplify the presentation
we will only consider initial data that are bounded away from zero and infinity.\\

Our equation \eqref{eq:vfd} is motivated by the so-called \emph{quantization problem}.
The term \emph{quantization} refers to the process of finding the \emph{best} approximation of a $d$-dimensional probability distribution by a convex combination of a finite number $N$ of Dirac masses.
This problem arises in several contexts and has applications in information theory (signal compression), numerical integration, and mathematical models in economics (optimal location of service centers). 
In order to explain the meaning of the equation  \eqref{eq:vfd}, we now briefly recall the gradient flow approach to the quantization problem
introduced in \cite{CGI1}, and further investigated in \cite{CGI2}.

Given $r\ge 1$, consider $\mu=\rho(x)\,dx$ a probability measure on an open set $\Omega \subset \mathbb{R}^n$.
   Given $N$ points $x^{1}, \ldots, x^{N} \in \Omega,$ one wants to find the  best approximation of $\mu,$ in the Wasserstein distance $W_r$, by a convex combination of Dirac masses centered at $x^{1}, \ldots, x^{N}.$ Hence one minimizes
$$
\inf \bigg\{ W_r\bigg(\sum_im_i \delta_{x^i}, \mu\bigg)^r\,:\, m_1, \ldots, m_N\ge0, \  \sum_im_i=1
\bigg\},
$$
with
$$
W_r(\nu_1, \nu_2):=\inf\bigg\{
\biggl(\int_{\Omega\times\Omega}|x-y|^rd\gamma(x,y)\biggr)^{1/r}\,:\,
(\pi_1)_\#\gamma=\nu_1,\  (\pi_2)_\#\gamma=\nu_2\bigg\},
$$
where $\gamma$ varies among all probability measures on $\Omega \times \Omega$, and $\pi_i: \Omega \times \Omega \to \Omega$ ($i=1,2$) denotes the canonical projection onto the $i$-th factor. See \cite{A, V} for more details on Wasserstein distances.

As explained in \cite[Chapter 1, Lemmas 3.1 and 3.4]{GL}, this problem is equivalent to minimizing the functional
 $$
F_{N,r} (x^{1}, \ldots, x^{N}) := \int_\Omega \underset{1\le i \le N}{\mbox{min}} | x^i-y |^r\,d\mu(y).
$$
To find a minimizer to this function, in \cite{CGI1} the authors introduce a \emph{dynamical approach}
where they study the dynamics of the gradient flow induced by $F_{N,r}$.
Since the main goal is to understand the structure of minimizers in the limit as $N$ tends to infinity,
in \cite[Introduction]{CGI1} and in \cite[Sections 2 and 3]{CGI2} the authors are able to find a formula for the ``limit'' of $F_{N,r}$ when $N\to \infty$.

As shown in \cite{CGI1}, when $n=1$ this limit is given by the functional 
$$
\mathcal F[X]:=\int_0^1 |\partial_\theta X|^{r+1}\,\rho(X)\,d\theta,
$$
and its $L^2$-gradient flow 
is given by the following non-linear parabolic equation
\begin{equation}\label{gradient flow}
\partial_tX=(r+1)\partial_\te\big(\rho(X)|\partial_\te X|^{r-1}\partial_\te X\big)
-\rho'(X)|\partial_\te X|^{r+1},
\end{equation}
coupled with the Dirichlet boundary condition.
%
%
This equation provides a Lagrangian description of the evolution of our system of particles in the limit $N \to \infty.$  
We can also study the Eulerian picture for \eqref{gradient flow}.
Indeed, if we denote by $f(t,x)$ the image of the Lebesgue measure through the map $X,$ i.e.,
$$
f(t,x)dx=X(t,\te)_\#d\te,
$$
then the PDE satisfied by $f$ takes the form (see \cite{A})
$$
\pt_tf(t,x)=-r\pt_x\bigg(f(t,x)\pt_x\Big(\frac{\r(x)}{f(t,x)^{r+1}}\Big) \bigg)
$$
with periodic boundary conditions,
and in view of the results in \cite{GL,CGI1}
we expect the following long time behavior:
$$
f(t,x) \longrightarrow \gamma \,\rho^{1/(r+1)}(x)\quad \text{as $t\to \infty$},\qquad \text{where }\gamma:=\frac{1}{\int_0^1 \r(y)^{1/(r+1)}dy}.
$$

More precisely, the results in \cite{CGI1} show the validity of the limit only when $r=2$
and under the assumption that $\rho$ is close to $1$ in $C^2$.
The goal here is to generalize and improve this result.
\\

Our starting point for studying the asymptotic behavior of \eqref{eq:vfd} is the observation that this equation can be seen as the gradient flow of the functional
\be\label{eq:F}
\mathcal F_\rho[f]:=\int_0^1\frac{\rho(x)}{f(x)^{r}} \,dx.
\ee
with respect to the $W_2$ distance.

In a first step, by exploiting a modulated $L^2$ energy method, we obtain exponential convergence to equilibrium under minimal assumptions on the density $\rho$.
Then, we investigate the displacement convexity of the functional $F_\rho$.
Notice that, as we shall prove in Proposition \ref{prop:Euler} below,
if $\rho$ and $f(0)$ are bounded away from zero, then $f(t)$ remains uniformly away from zero for all $t \geq 0$.
In particular \eqref{eq:vfd} is uniformly parabolic, and $f(t)$
is smooth if $\rho$ is so.

Since our focus is on the asymptotic behavior, we shall assume that $\rho$ is of class $C^{2,\alpha}$ for some $\alpha \in (0,1)$,
so that parabolic regularity theory ensures that $f(t)$ is of class $C^{2,\alpha}$ for all times, hence $f$ is a classical solution.
However our results are independent of the smoothness of $\rho$ and can be thought as a priori estimates.
In particular, we believe one could extend them to the setting of weak solutions by using the general theory of minimizing movements in \cite{AGS} (See also \cite{CS}).
Since our main goal is to understand the general asymptotic properties of the equation \eqref{eq:vfd},
we shall not investigate this here.

Our first result is the following:

\begin{thm}
\label{thm:1}
Let $\lambda \in (0,1]$, and assume that $\rho:[0,1]\to [\lambda,1/\lambda]$ is periodic and of class $C^{2,\alpha}$ for some $\alpha \in (0,1)$.
Let $f(0,\cdot):[0,1]\to \mathbb R$ satisfy $0<a_1 \leq f(0,\cdot) \leq A_1$, and let $f$ solve \eqref{eq:vfd} with periodic boundary conditions.
Then
\begin{equation}
\label{eq:bound f}
a:=a_1\lambda^{2/(r+1)}\leq f(t,x)\leq \frac{A_1}{\lambda^{2/(r+1)}}=:A\qquad \text{for all}\  \,t \geq 0,\,x \in [0,1],
\end{equation}
and there exist positive constants $C_0,c_0$, depending only on $\lambda$, $a_1$, and $A_1$, such that
$$
\|f(t) - \gamma\,\rho^{1/(r+1)}\|_{L^2([0,1])}\leq C_0\,e^{-c_0t}\qquad \text{for all}\  \,t \geq 0.
$$
\end{thm}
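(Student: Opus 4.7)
The strategy, as suggested in the introduction, is to combine a maximum principle for an auxiliary quantity (to obtain the pointwise bounds on $f$) with the $W_2$-gradient flow structure of $\mathcal F_\rho$ and a modulated $L^2$ estimate that absorbs the $\rho$-dependence into a single substitution.

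For the $L^\infty$ bounds \eqref{eq:bound f}, I would introduce the auxiliary quantity $h(t,x) := \rho(x)/f(t,x)^{r+1}$. Writing \eqref{eq:vfd} as $\partial_t f = -r\partial_x(f \partial_x h)$ and differentiating in $t$ gives
\begin{equation*}
\partial_t h \;=\; \frac{r(r+1)\,h}{f}\, \partial_x(f \partial_x h).
\end{equation*}
At an interior maximum of $x \mapsto h(t,x)$ one has $\partial_x h = 0$ and $\partial_x^2 h \leq 0$, so $\partial_x(f\partial_x h) = f\partial_x^2 h \leq 0$ and hence $\partial_t(\max_x h) \leq 0$; the analogous argument at the minimum shows that $\min_x h$ is non-decreasing. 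Thus $h(t,\cdot)$ stays trapped in the interval $[\lambda/A_1^{r+1},\,1/(\lambda a_1^{r+1})]$ determined by the initial data, and inverting $f = (\rho/h)^{1/(r+1)}$ together with $\rho \in [\lambda,1/\lambda]$ yields \eqref{eq:bound f} with the constants stated.

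For the exponential decay I would use $\mathcal F_\rho[f] - \mathcal F_\rho[f_\infty]$ as Lyapunov functional, where $f_\infty := \gamma\rho^{1/(r+1)}$ is the unique minimizer of $\mathcal F_\rho$ under the mass constraint. The $W_2$-gradient flow identity gives
\begin{equation*}
\frac{d}{dt}\,\mathcal F_\rho[f] \;=\; -r^2 \int_0^1 f\,\bigl(\partial_x(\rho f^{-(r+1)})\bigr)^2 dx.
\end{equation*}
The key algebraic observation is that the substitution $v := (f-f_\infty)\,\rho^{-1/(r+1)}$ recasts the dissipation in a clean form: using the Euler--Lagrange relation $f_\infty'/f_\infty = \rho'/\bigl((r+1)\rho\bigr)$, a direct computation yields
\begin{equation*}
\partial_x(\rho f^{-(r+1)}) \;=\; -(r+1)\,\rho^{(r+2)/(r+1)}\, f^{-(r+2)}\, v_x,
\end{equation*}
so that the potentially large term $\rho'$ is absorbed inside $v_x$. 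Combined with the pointwise bounds from Step~1 this produces $d\mathcal F_\rho/dt \leq -c_1 \int v_x^2\,dx$ with $c_1 = c_1(\lambda,a_1,A_1)$.

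To close the estimate, a Taylor expansion of $\mathcal F_\rho$ about $f_\infty$ exploits the fact that $\rho/f_\infty^{r+1} = \gamma^{-(r+1)}$ is a constant, so the first-order term vanishes by mass conservation $\int (f-f_\infty)\,dx = 0$, yielding a two-sided bound
\begin{equation*}
c_2 \int v^2\,dx \;\leq\; \mathcal F_\rho[f] - \mathcal F_\rho[f_\infty] \;\leq\; c_3 \int v^2\,dx
\end{equation*}
with $c_2, c_3$ depending only on $\lambda,a_1,A_1$. Mass conservation rewrites as the weighted mean-zero constraint $\int v\,\rho^{1/(r+1)}\,dx = 0$, and since $\rho^{1/(r+1)}$ is pinched above and below by constants depending only on $\lambda$, a short argument upgrades the periodic mean-zero Poincar\'e inequality on $[0,1]$ to the weighted form $\int v^2\,dx \leq C_P(\lambda)\int v_x^2\,dx$. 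Chaining these three estimates gives $\frac{d}{dt}(\mathcal F_\rho[f]-\mathcal F_\rho[f_\infty]) \leq -c\,(\mathcal F_\rho[f]-\mathcal F_\rho[f_\infty])$; Gronwall and the lower bound relating $\mathcal F_\rho[f]-\mathcal F_\rho[f_\infty]$ to $\|f-f_\infty\|_{L^2}^2$ then yield the claimed exponential decay. The main obstacle will be spotting the factorisation identity above: once $v = (f-f_\infty)/\rho^{1/(r+1)}$ is recognised as the correct modulated variable, everything else (mass conservation, Taylor expansion, weighted Poincar\'e, Gronwall) is standard, and all constants depend only on $\lambda, a_1, A_1$ as required.
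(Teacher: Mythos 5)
Your proposal is correct, and it establishes both parts of the theorem with constants of the required form. The route differs from the paper's in two places.

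For the $L^\infty$ bounds, you use a pointwise maximum principle on $h=\rho/f^{r+1}$: the evolution $\partial_t h = r(r+1)\frac{h}{f}\,\partial_x(f\partial_x h)$ (which I checked) indeed forces $\max_x h$ down and $\min_x h$ up, and unwinding $f=(\rho/h)^{1/(r+1)}$ recovers exactly the constants $a=\lambda^{2/(r+1)}a_1$ and $A=A_1/\lambda^{2/(r+1)}$. The paper instead sets $u=f/\rho^{1/(r+1)}$ and runs an $L^1$-contraction argument, showing $\int(u-c_0)_-\rho^{1/(r+1)}\,dx$ and $\int(u-C_0)_+\rho^{1/(r+1)}\,dx$ are nonincreasing by testing with smoothed sign functions. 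Both give the same conclusion; the paper's version is more robust (it doesn't require evaluating at a maximum point and would survive at lower regularity), whereas yours is shorter and entirely elementary for the classical solutions being considered.

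For the exponential decay, your modulated energy $\mathcal F_\rho[f]-\mathcal F_\rho[f_\infty]$ is literally the paper's $\int_0^1 G_x[f]\,dx$ — in the paper's notation $G_x$ is the second-order Taylor remainder of $F_x[s]=\rho/s^r$ about $f_\infty=\gamma\rho^{1/(r+1)}$, and the linear term integrates to zero by mass conservation, just as in your expansion. Your factorisation $\partial_x(\rho f^{-(r+1)})=-(r+1)\rho^{(r+2)/(r+1)}f^{-(r+2)}v_x$ with $v=f\rho^{-1/(r+1)}-\gamma$ is correct (one has $v_x=-g_x/g^2$ with $g=\rho^{1/(r+1)}/f$, and $\partial_x(\rho f^{-(r+1)})=(r+1)g^r g_x$) and is exactly the paper's chain-rule step written in the reciprocal variable. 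The paper then closes the loop by Poincar\'e around the spatial average $\alpha(t)=\int\rho^{1/(r+1)}/f\,dx$, followed by a minimisation over centering constants to replace $\alpha(t)$ by $\gamma$; you instead impose the weighted mean-zero condition $\int v\,\rho^{1/(r+1)}\,dx=0$ coming from mass conservation and invoke a weighted Poincar\'e inequality. These are two packagings of the same estimate, and your formulation is arguably the cleaner one since $\gamma$ appears from the start rather than through an auxiliary optimisation. In short, the decay argument is essentially the paper's argument, reorganised, while the $L^\infty$ step is a genuinely different (if less robust) device.
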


The result above shows the exponential convergence to equilibrium with a rate {\em independent} of the smoothness of $\rho$.
However, it does not say anything about stability of solutions.
For this, we investigate the convexity of the functional $\mathcal F_\rho$ with respect to the $2$-Wasserstein distance $W_2$.
In particular, we show that if $\|\rho'\|_\infty+\|\rho''\|_\infty$ is small enough, then $\mathcal F_\rho$ is uniformly convex.

\begin{thm}
\label{thm:2}
Let $\lambda \in (0,1]$, and assume that $\rho:[0,1]\to [\lambda,1/\lambda]$ is periodic and of class $C^{2,\alpha}$ for some $\alpha \in (0,1)$.
Let $f_1(0,\cdot),f_2(0,\cdot):[0,1]\to \mathbb R$ satisfy $0<a_1 \leq f_1(0,\cdot),f_2(0,\cdot) \leq A_1$,
and let $f_1,f_2$ solve \eqref{eq:vfd} with periodic boundary conditions, and let $a,A>0$ be as in Theorem \ref{thm:1}.
Assume that $\|\rho'\|_\infty \leq \eta_1$ and $\|\rho''\|_\infty \leq \eta_2$ for some $\eta_1,\eta_2>0$.
Then 
$$
W_2(f_1(t),f_2(t))\leq e^{-\mu\,t}W_2(f_1(0),f_2(0))\qquad \text{for all} \ \ \,t \geq 0,
$$
where
$$
\mu:=\frac{1}{A}\biggl(\frac{r(r+1) \lambda}{A^r} -\frac{2\eta_1^2(r+1)A^r}{r\lambda a^{2r} }-\frac{\eta_2}{a^r}\biggr).
$$
\end{thm}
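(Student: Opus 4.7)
Since equation~\eqref{eq:vfd} is the $W_2$-gradient flow of $\mathcal F_\rho$ (as noted in the introduction), the contraction estimate follows from $\mu$-displacement convexity of the functional: by the standard theory of gradient flows in Wasserstein spaces \cite{AGS}, it suffices to show that on the set of densities valued in $[a,A]$ --- which by Theorem~\ref{thm:1} contains the solutions $f_1(t)$ and $f_2(t)$ for every $t\ge 0$ --- the Wasserstein Hessian of $\mathcal F_\rho$ is bounded below by $\mu$ times the squared tangent norm.

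The Hessian is obtained by differentiating $\mathcal F_\rho$ twice along a Wasserstein geodesic. Writing the geodesic as $f_s=((1-s)\mathrm{id}+sT)_\# f_0$ with $T$ the optimal transport map and changing variables gives
\begin{equation*}
\mathcal F_\rho[f_s]=\int_0^1 \rho(T_s(x))\,\frac{(T_s'(x))^{r+1}}{f_0(x)^r}\,dx,
\end{equation*}
and differentiating twice at $s=0$ yields
\begin{equation*}
\Hess\,\mathcal F_\rho[\phi,\phi]=\int_0^1\frac{\rho''\phi^2+2(r+1)\rho'\phi\phi'+r(r+1)\rho(\phi')^2}{f_0^r}\,dx,
\end{equation*}
with $\phi=T-\mathrm{id}$ the tangent vector (equivalently, this is the $L^2$-Hessian of the Lagrangian energy $\int_0^1\rho(X)(X')^{r+1}\,d\theta$). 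Using $|\rho''|\le\eta_2$, $\rho\ge\lambda$, and $a\le f_0\le A$, the first and third terms are bounded below by $-\frac{\eta_2}{a^r}\int\phi^2\,dx$ and $\frac{r(r+1)\lambda}{A^r}\int(\phi')^2\,dx$ respectively. The sign-indefinite cross term is controlled via the Young inequality $2|\phi\phi'|\le\alpha\phi^2+(\phi')^2/\alpha$ with the choice $\alpha=2\eta_1 A^r/(r\lambda a^r)$, so that precisely half of the good term absorbs its $(\phi')^2$-contribution and leaves a residual $-\frac{2(r+1)\eta_1^2 A^r}{r\lambda a^{2r}}\int\phi^2\,dx$; the surviving half $\frac{r(r+1)\lambda}{2A^r}\int(\phi')^2\,dx$ is then converted into a $\phi^2$-integral via a periodic Poincar\'e (Wirtinger) inequality of the form $\int(\phi')^2\,dx\ge 2\int\phi^2\,dx$, applicable to the mean-zero part of the optimal-transport displacement on the one-dimensional torus. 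Summing the three contributions gives $\Hess\,\mathcal F_\rho[\phi,\phi]\ge A\mu\int\phi^2\,dx$, and the inequality $\int\phi^2 f_0\,dx\le A\int\phi^2\,dx$ converts this into $\Hess\,\mathcal F_\rho[\phi,\phi]\ge\mu\int\phi^2 f_0\,dx$, i.e.\ the desired $\mu$-displacement convexity.

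The main delicate point is the balancing in the Young inequality, whose parameter $\alpha$ determines the exact value of $\mu$: one has to leave just enough of the good term to feed into Wirtinger with the correct coefficient. A secondary technical issue --- ensuring that the quadratic bound at the base point is valid uniformly along the entire $W_2$-geodesic joining $f_1(t)$ and $f_2(t)$ --- is most transparent in Lagrangian coordinates, where the density bound $a\le f\le A$ from Theorem~\ref{thm:1} translates to $1/A\le X'\le 1/a$ on the quantile map and is preserved under the linear interpolation $X_s=(1-s)X_1+sX_2$, so that the estimates above hold with the same constants at every point of the geodesic.
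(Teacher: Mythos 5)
Your proof is correct and reaches exactly the quadratic form and constants of the paper, but by a different computational route. The paper evaluates the Wasserstein Hessian in Otto's formalism: the tangent variable is a periodic Kantorovich potential $\phi$ with $\delta f=\partial_x(f\,\partial_x\phi)$, and the second derivative of $\mathcal F_\rho$ along geodesics is obtained from the continuity/Hamilton--Jacobi system \eqref{eq: var f}--\eqref{eq:HJ} after several integrations by parts, yielding
$$
2(r+1)\!\int_0^1\!\frac{\partial_x\rho}{f^r}\,\partial_x\phi\,\partial_{xx}\phi\,dx
+r(r+1)\!\int_0^1\!\frac{\rho}{f^r}(\partial_{xx}\phi)^2\,dx
+\int_0^1\!\frac{\partial_{xx}\rho}{f^r}(\partial_x\phi)^2\,dx.
$$
You instead push the functional forward by $T_s=\mathrm{id}+s(T-\mathrm{id})$ and differentiate $\int\rho(T_s)(T_s')^{r+1}f_0^{-r}\,dx$ directly; since in 1D the Otto potential satisfies $\partial_x\phi=-(T-\mathrm{id})$, the two expressions are identical term by term, and your Young/Wirtinger estimate with $\alpha=2\eta_1A^r/(r\lambda a^r)$ matches the paper's choice $\e=1/\alpha$. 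What the Lagrangian viewpoint buys you is a shorter Hessian computation (no repeated integration by parts) and a one-line proof that the density bounds $a\le f^s\le A$ propagate along the whole geodesic (linear interpolation of quantile maps with $1/A\le X_s'\le 1/a$), where the paper instead derives the harmonic-mean formula $f^s\circ T_s=f_1 f_2\!\circ\! T/(sf_1+(1-s)f_2\!\circ\! T)$. One small point: you hedge by invoking Wirtinger only on ``the mean-zero part'' of the displacement, but on the torus $T-\mathrm{id}$ is exactly $-\partial_x\phi$ for a periodic potential $\phi$, hence already mean-zero, so no projection is needed; this is implicitly what the paper uses as well.
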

The arguments used to prove Theorems \ref{thm:1} and \ref{thm:2} are very general, and could be applied also to the $n$-dimensional version of \eqref{eq:vfd}.
However, since the connection between this equation and the quantization problem is valid only in 1D, we have decided to state and prove these results only on the $1$
dimensional case.

\section{Maximum principle}

The goal of this section is to prove a maximum-type principle for \eqref{eq:vfd} which shows that,
if $\rho$ and $f(0)$ are bounded away from zero, then $f(t)$ remains uniformly away from zero for all $t \geq 0$.
In particular \eqref{eq:vfd} is uniformly parabolic, and $f(t)$
is smooth if $\rho$ is so. Note that, the following Proposition corresponds to the first part of Theorem \ref{thm:1}.

\begin{prop}
\label{prop:Euler}
Let $\lambda \in (0,1]$, and assume that $\rho:[0,1]\to [\lambda,1/\lambda]$ is periodic and of class $C^{k,\alpha}$ for some $k \geq 0$ and $\alpha \in (0,1)$.
Let $f(0,\cdot):[0,1]\to \mathbb R$ be a periodic function of class $C^{k,\alpha}$ satisfying $0<a_1 \leq f(0,\cdot) \leq A_1$, and let $f$ solve \eqref{eq:vfd} with periodic boundary conditions. Then
$$
 \lambda^{2/(r+1)}a_1 \leq f(t,x) \leq \frac{A_1}{\lambda^{2/(r+1)}}\qquad \text{for all} \ \,t \geq 0,
$$
$f(t,\cdot)$ is of class $C^{k,\alpha}$ for all $t \geq 0$,
and
there exists a constant $C$, depending only on
$\lambda$, $\|\rho\|_{C^{k,\alpha}}$, $k$, $\alpha$, $a_1$, and $A_1$, such that
$\|f(t,\cdot)\|_{C^{k,\alpha}([0,1])} \leq C$ for all $t \geq 0.$
\end{prop}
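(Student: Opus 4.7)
The natural quantity to track is $w:=\rho/f^{r+1}$, which is exactly the object differentiated in \eqref{eq:vfd}: at the expected equilibrium $f=\gamma\rho^{1/(r+1)}$ the function $w$ reduces to a constant, and the PDE can be rewritten as $\pt_t f=-r\pt_x(f\,\pt_x w)$. The first step is to derive the evolution equation for $w$ itself. Using $\pt_t w=-(r+1)(w/f)\,\pt_t f$, one gets
\begin{equation*}
\pt_t w \;=\; r(r+1)\,w\Bigl(\pt_x^2 w \;+\; \frac{\pt_x f}{f}\,\pt_x w\Bigr).
\end{equation*}
This is a (quasilinear) parabolic equation for $w$, uniformly parabolic on any region where $f$ is bounded away from $0$ and $w$ is bounded below by a positive constant; but crucially the drift coefficient will not matter for the argument.

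Next I apply a pointwise maximum principle to $w$. At any $x_0$ where $w(t,\cdot)$ attains its spatial maximum one has $\pt_x w(t,x_0)=0$ and $\pt_x^2 w(t,x_0)\le 0$; since $w>0$, the equation above gives $\pt_t w(t,x_0)\le 0$. Analogously at a spatial minimum $\pt_t w\ge 0$. A standard parabolic comparison argument (applied to $w\pm \e t$ to handle the not-strict signs, then sending $\e\to 0$) then yields
\begin{equation*}
\min_{[0,1]} w(0,\cdot) \;\le\; w(t,x) \;\le\; \max_{[0,1]} w(0,\cdot)\qquad\forall\,t\ge 0,\ x\in[0,1].
\end{equation*}
From $a_1\le f(0,\cdot)\le A_1$ and $\lambda\le\rho\le 1/\lambda$ one has $\lambda/A_1^{r+1}\le w(0,\cdot)\le 1/(\lambda a_1^{r+1})$, and inverting $f=(\rho/w)^{1/(r+1)}$ delivers exactly the two-sided bound $\lambda^{2/(r+1)} a_1\le f(t,x)\le A_1/\lambda^{2/(r+1)}$ claimed in the proposition.

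With this quantitative positivity bound in hand, the regularity statement follows from standard parabolic theory. Expanding the right-hand side of \eqref{eq:vfd} explicitly, the PDE takes the quasilinear form
\begin{equation*}
\pt_t f \;=\; \frac{r(r+1)\rho}{f^{r+1}}\,\pt_x^2 f \;+\; \mathcal{B}(x,f,\pt_x f),
\end{equation*}
where the leading coefficient is bounded between two positive constants depending only on $\lambda$, $a_1$, $A_1$, and $\mathcal{B}$ depends smoothly (in fact $C^{k,\alpha}$) on its arguments in the range determined by \eqref{eq:bound f}. Hence the equation is uniformly parabolic on $[0,\infty)\times[0,1]$, and Schauder/Krylov--Safonov estimates (e.g.\ in the form of Ladyzhenskaya--Solonnikov--Ural'ceva) applied on unit time slabs give uniform $C^{k,\alpha}$ bounds on $f(t,\cdot)$ depending only on $\lambda$, $\|\rho\|_{C^{k,\alpha}}$, $k$, $\alpha$, $a_1$, $A_1$.

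\textbf{Main difficulty.} The computational crux is really identifying the right ``modulated'' variable: a direct maximum principle on $f$ fails because the sign of $\rho''$ enters the evolution at a maximum of $f$. The choice $w=\rho/f^{r+1}$ is dictated by the structure of \eqref{eq:vfd} (it is the argument of the outer gradient, and is constant at equilibrium), and it is this observation that makes the pointwise argument clean. The only other technical point is that the drift coefficient $\pt_x f/f$ in the $w$-equation is not a priori controlled, but since it is multiplied by $\pt_x w$, it disappears at extrema and therefore plays no role in the comparison step.
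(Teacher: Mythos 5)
Your proof is correct but follows a genuinely different route from the paper's. The paper sets $m=\rho^{1/(r+1)}$ and $u=f/m$, rewrites \eqref{eq:vfd} as $\partial_t u = -\frac{r+1}{m}\,\partial_x\bigl(m\,\partial_x(u^{-r})\bigr)$, and runs an integral (Stampacchia/$L^1$-contraction type) comparison with the constants $c_0$ and $C_0$: it multiplies by $-m\,\phi_\e(u^{-r}-c_0^{-r})$ with $\phi_\e$ a regularized Heaviside, integrates by parts, and concludes that $\int_0^1(u-c_0)_-\,m\,dx$ and $\int_0^1(u-C_0)_+\,m\,dx$ are nonincreasing, hence identically zero. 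Your variable $w=\rho/f^{r+1}$ is the monotone transform $w=u^{-(r+1)}$ of the paper's $u$ — both are constant at the equilibrium and carry the same two-sided information, and both changes of variable yield exactly the bounds $\lambda^{2/(r+1)}a_1\le f\le A_1/\lambda^{2/(r+1)}$ — but you run a pointwise classical maximum principle on $w$ instead of an integral comparison on $u$. Your observation that the drift $(\partial_x f/f)\,\partial_x w$ vanishes at spatial extrema of $w$ is precisely what makes the pointwise argument close, and your diagnosis of why a direct maximum principle on $f$ fails (the $\rho''$ contribution is sign-indefinite at extrema of $f$) correctly identifies the obstruction that motivates the change of variables. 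The trade-off: the paper's $L^1$ argument needs only $H^1$ regularity of $u$ and one integration by parts, consonant with its remark that the bounds should be viewable as a priori estimates in a weak or minimizing-movement framework; your pointwise argument is more elementary once classical regularity is in hand, and you rightly flag the standard $\e t$ perturbation needed to handle the non-strict inequalities. Both approaches conclude identically with parabolic Schauder estimates for the $C^{k,\alpha}$ bound.
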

\begin{proof}
It is enough to prove the bound
$$
 \lambda^{2/(r+1)}a_1 \leq f(t,x) \leq \frac{A_1}{\lambda^{2/(r+1)}}\qquad \text{for all} \ \,t \geq 0,
$$
since then, once these bounds are proved, the rest of the proposition follows by standard parabolic regularity.

To prove the result, we set
$$
m(x):=\rho(x)^{1/(r+1)},\qquad u(t,x):=\frac{f(t,x)}{m(x)}.
$$
With these new unknowns \eqref{eq:vfd} becomes
\begin{equation}
\label{flow1:eq:euler comparison}
\partial_tu = -\frac{r+1}{m} \,\partial_x\biggl(m\,\partial_x\biggl(\frac{1}{u^r}\biggr) \biggr) \qquad
\text{on $[0,\infty)\times [0,1]$}
\end{equation}
with periodic boundary conditions.
The advantage of this form is that constants are solutions and we can prove a comparison principle with them.
More precisely, we set $c_0:= \lambda^{1/(r+1)}a_1$ and $C_0:=\frac{A_1}{\lambda^{1/(r+1)}}$.

Recalling the notation $s_+=\max\{s,0\}$ and $s_-=\max\{-s,0\},$ we claim that the maps
$$
t \mapsto \int_0^1 (u(t,x)-c_0)_-m\,dx \quad \text{and}\quad t \mapsto \int_0^1 (u(t,x)-C_0)_+m\,dx
$$
are nonincreasing functions.
Since $u(0,x):=\frac{f(0,x)}{m(x)}$, $a_1 \leq f(0)\leq A_1$, and $\lambda^{1/(r+1)}\leq m\leq \lambda^{-1/(r+1)}$,
it follows that
$$
\int_0^1 (u(0,x)-c_0)_-m\,dx=\int_0^1 (u(0,x)-C_0)_+m\,dx=0.
$$
Hence, thanks to the claim
$$
\int_0^1 (u(t,x)-c_0)_-m\,dx=\int_0^1 (u(t,x)-C_0)_+m\,dx=0\qquad \forall\,t \geq 0,
$$
therefore $c_0 \leq u(t,x)\leq C_0$ for all times.
Recalling that $u(t,x)=\frac{f(t,x)}{m(x)}$ and that $\lambda^{1/(r+1)}\leq m(x)\leq \lambda^{-1/(r+1)}$, this proves the result.
Hence, we only need to prove the claim.

To this aim, we only show that 
$$
t \mapsto \int_0^1 (u(t,x)-c_0)_-m\,dx 
$$
is nonincreasing (the other statement being analogous).

Since constants are solutions of \eqref{flow1:eq:euler comparison}, it holds
$$
\partial_t(u-c_0) = -\frac{r+1}{m} \,\partial_x\biggl(m\,\partial_x\biggl(\frac{1}{u^r} - \frac{1}{c_0^r}\biggr) \biggr).
$$
We now multiply the above equation by $-m\,\phi_\e\left(\frac{1}{u^r} - \frac{1}{c_0^r}\right)$,
with $\phi_\e$ a smooth approximation of the indicator function of ${\mathbb R_+}$ satisfying $\phi_\e'\geq 0$. Integrating by parts we get
\begin{align*}
\frac{d}{dt}\int_0^1 \Psi_\e(u-c_0)m\,dx&=
-\int_0^1 \phi_\e\left(\frac{1}{u^r} - \frac{1}{c_0^r}\right) \, \partial_t(u-c_0) \,m\,dx\\
&=-(r+1) \int_0^1 \biggl|\partial_x\biggl(\frac{1}{u^r} - \frac{1}{c_0^r}\biggr) \biggr|^2 \phi_\e'\biggl(\frac{1}{u^r} - \frac{1}{c_0^r}\biggr)\,m\,dx \leq 0,
\end{align*}
where we have set
$$
\Psi_\e(s):=-\int_0^s \phi_\e\biggl(\frac{1}{(\sigma+c)^r} - \frac{1}{c_0^r} \biggr)\,d\sigma.
$$
Letting $\e \to 0$ we see that $\Psi_\e(s) \to s_-$ for $s \geq -c_0$, hence
$$
\frac{d}{dt}\int_0^1 (u-c_0)_-m\,dx\leq 0,
$$
proving the result.

\end{proof}

\section{Exponential convergence to equilibrium: proof of Theorem \ref{thm:1}}
We begin by observing that,
thanks to Proposition \ref{prop:Euler}, 
$f(t)$ satisfies \eqref{eq:bound f}.
Also, recalling the definition of $\mathcal F_\rho$ (see \eqref{eq:F}), a direct computation gives
then
$$
\frac{d}{dt}\mathcal F_\rho[f(t)]=-r^2\int_0^1f(t,x)\left|\pt_x\( \frac{\rho(x)}{f(t,x)^{r+1}}\) \right|^2\,dx.
$$
Given $x \in [0,1]$, let us define the function
$$
(0,\infty)\ni s\mapsto F_x[s]:=\frac{\rho(x)}{s^r}, 
$$
so that
$$
\mathcal F_\rho[f(t)] = \int_0^1 F_x(f(t,x))\,dx.
$$
Then, 
$$
F_x[\gamma\rho(x)^{1/{r+1}}]=\frac{1}{\gamma^r}\rho(x)^{1/{r+1}},
$$
with $\gamma$ the renormalization constant of the stationary solution (so that $\gamma \rho^{1/(r+1)}$ is a probability density).

Now we will to introduce a function $G_x[f]$ that, up to translation, has the same integral of $F_x[f],$ and such that $G_x[f]$ can be used to perform an $L^2$ Gronwall estimate.
We define
$$
G_x[s]:=F_x[s]-F_x[\gamma\rho(x)^{1/{r+1}}]-F_x'[\gamma\rho(x)^{1/{r+1}}](s-\gamma\rho(x)^{1/{r+1}}).
$$
Then,
$$
G_x[s]=\frac{1}{2}\[\int_0^1F_x''[\tau s+(1-\tau)\gamma\rho(x)^{1/{r+1}}]\,d\tau\]\, (s-\gamma\rho(x)^{1/{r+1}})^2.
$$
By Proposition \ref{prop:Euler} we have that $f$ is bounded away from zero and infinity, see  \eqref{eq:bound f}.
Therefore, since $F_x$ is uniformly convex in $[a,A]$, it holds
$$
b\,|f(t,x)-\gamma\rho(x)^{1/{r+1}}|^2\le G_x[f(t,x)]\le B\,|f(t,x)-\gamma\rho(x)^{1/{r+1}}|^2 
$$
for all times, with $b,B$ positive constants.

Moreover,
$$
G_x[f(t,x)]=\frac{\rho(x)}{f(t,x)^r}-\frac{\rho(x)^{1/{r+1}}}{\gamma^r}+\frac{r}{\gamma^{r+1}}(f(t,x)-\gamma\rho(x)^{1/{r+1}}),
$$
thus, since $f$ and $\gamma\rho(x)^{1/{r+1}}$ are two probability densities, $G_x$ and $F_x$ have the same integral up to an additive constant:
$$
\int_0^1G_x[f(t,x)]\,dx=\int_0^1F_x[f(t,x)]\,dx-\int_0^1\frac{\rho(x)^{1/{r+1}}}{\gamma^r}\,dx.
$$
Therefore
\begin{align*}
\frac{d}{dt}\int_0^1G_x[f(t,x)]\,dx&=\frac{d}{dt}\int_0^1F_x[f(t,x)]\,dx\\
&=-{r^2}\int_0^1f(t,x)\left|\pt_x\( \frac{\rho(x)}{f(t,x)^{r+1}}\) \right|^2\,dx\\
&\le -{r^2}a\int_0^1\left|\pt_x\( \frac{\rho(x)}{f(t,x)^{r+1}}\) \right|^2\,dx.
\end{align*}
Notice that
$$
\pt_x\biggl(\frac{\rho(x)}{f(t,x)^{r+1}}\biggr)=\pt_x\bigg(\biggl(\frac{\rho(x)^{1/{r+1}}}{f(t,x)}\biggr)^{r+1}\biggr)=(r+1)\biggl(\frac{\rho(x)^{1/{r+1}}}{f(t,x)}\biggr)^r\pt_x\biggl(\frac{\rho(x)^{1/{r+1}}}{f(t,x)}\biggr).
$$
Thus, denoting by $c$ and $C$ positive constants depending only on $\lambda, a, A, r,$ and that $c$ and $C$ may change from line to line, we have:
\begin{align*}
\int_0^1\left|\pt_x\(\frac{\rho(x)}{f(t,x)^{r+1}}\)\right|^2\,dx&=(r+1)^2\int_0^1\bigg(\frac{\rho(x)^{1/{r+1}}}{f(t,x)}\bigg)^{2r}\bigg|\pt_x\bigg(\frac{\rho(x)^{1/{r+1}}}{f(t,x)}\bigg)\bigg|^2\,dx\\
&\ge c\int_0^1\bigg|\pt_x\bigg(\frac{\rho(x)^{1/{r+1}}}{f(t,x)}\bigg)\bigg|^2\,dx\\
&\ge c\int_0^1\bigg|\frac{\rho(x)^{1/{r+1}}}{f(t,x)}-\frac{1}{\alpha(t)}\bigg|^2\,dx\\
&=c \int_0^1\frac{1}{\alpha^2(t) f(t,x)^2}\bigg|\alpha(t)\rho^{1/{r+1}}-f(t,x) \bigg|^2\,dx,
\end{align*}
where
$$
\alpha(t)=\int_0^1\frac{\rho(x)^{1/{r+1}}}{f(t,x)}\,dx
$$
is bounded away from zero and infinity for all times (thanks to  \eqref{eq:bound f} and the bound $\lambda \leq \rho\leq 1/\lambda$):
$$
0<c\le\alpha(t)\le C<\infty.
$$
Therefore
\begin{align*}
\int_0^1\left|\pt_x\(\frac{\rho(x)}{f(t,x)^{r+1}}\)\right|^2\,dx&\ge c \int_0^1\frac{1}{\alpha^2(t) f(t,x)^2}\left|\alpha(t)\rho^{1/{r+1}}-f(t,x) \right|^2\,dx\\
&\ge c  \int_0^1\left|\alpha(t)\rho^{1/{r+1}}-f(t,x) \right|^2\,dx.\\
\end{align*}
Now, the problem is that $\alpha(t)$ \emph{a priori} does not coincide with $\gamma.$
For this reason we use the following trick: 
\begin{align*}
\int_0^1\left|\pt_x\(\frac{\rho(x)}{f(t,x)^{r+1}}\)\right|^2\,dx&\ge c  \int_0^1\left|\alpha(t)\rho^{1/{r+1}}-f(t,x) \right|^2\,dx\\
&=c \int_0^1\rho(x)^{1/{r+1}}\left|\alpha(t)\rho^{1/2({r+1})}-\frac{f(t,x)}{\rho(x)^{1/{2(r+1)}}} \right|^2\,dx\\
&\ge c \int_0^1\left|\alpha(t)\rho^{1/2(r+1)}-\frac{f(t,x)}{\rho(x)^{1/{2(r+1)}}} \right|^2\,dx\\
&\ge c \min_{\beta} \int_0^1\left|\beta\rho^{1/2(r+1)}-\frac{f(t,x)}{\rho(x)^{1/{2(r+1)}}} \right|^2\,dx\\
&= c \int_0^1\left|\gamma\rho^{1/2(r+1)}-\frac{f(t,x)}{\rho(x)^{1/{2(r+1)}}} \right|^2\,dx\\
&= c \int_0^1\left|\gamma\rho^{1/{r+1}}-f(t,x)\right|^2 \frac{1}{\rho(x)^{1/{r+1}}}\,dx\\
&\ge c \int_0^1\left|\gamma\rho^{1/{r+1}}-f(t,x)\right|^2\,dx\\
&\ge \frac{c}{B}\int_0^1 G_x[f](t,x)dx.
\end{align*}
Therefore, by Gronwall Lemma, there exists a constant $\hat{c}$ such that 
$$
\int_0^1 G_x[f(t,x)]\,dx\le e^{-\hat{c}t}\int_0^1 G_x[f(0,x)]\,dx.
$$
Since $G_x[f(t,x)]$ is comparable to $\left|f(t,x)-\gamma \rho^{1/{r+1}}\right|^2$, this Gronwall estimate implies the exponential convergence of $f$ to the stationary solution $\gamma \rho^{1/{r+1}}$, namely
$$
\int_0^1\left|f(t,x)-\gamma \rho^{1/{r+1}}\right|^2\le \hat C\, e^{-\hat{c}t},
$$
as desired

\section{Stability in $W_2$: proof of Theorem \ref{thm:2}}
To prove Theorem \ref{thm:2}, we shall first compute the Hessian of $\mathcal F_\rho[f]$ at a fixed probability density $f$,
and then we apply this estimate to prove the contraction along two solutions of \eqref{eq:vfd}.
Since, under our assumptions, solutions are of class $C^{2,\alpha}$, in the next section we assume that $f \in C^2$.

\subsection{Hessian of $\mc{F}_{\rho}[f]$}
In this section we compute the Hessian of 
$$
\mathcal F_\rho[f]=\int_0^1\frac{\rho(x)}{f(x)^{r}} \,dx
$$
with respect to $W_2$. For this, we use the Riemannian formalism introduced in \cite{O}. 

Our state space $\mathcal{M}$ is the space of positive functions $f:(0,1)\to (0, \infty)$ with unit integral:
$$
\int_0^1f\,dx=1.
$$
We may think of infinitesimal perturbations $\delta f \in T_f\mathcal M$ of a state $f\in \mathcal M$ as functions $\delta f: (0,1) \to \br$ with
\be\label{eq:int0}
\int_0^1\delta f\,dx=0.
\ee
For given $f\in \mathcal M$ we define the scalar product $g_f$ on $T_f\mathcal M$ via
$$
g_f(\delta f_0, \delta f_1):= \int_0^1 \pt_x \phi_0 \,\pt_x \phi_1 fdx, 
$$
where, up to additive constants, the functions $\phi_i: (0,1)\to \br$ are definite by
\be\label{delta f}
\delta f_i-\pt_x (f \pt_x\phi_i)=0.
\ee
Note that, since the variational derivative of $\mathcal F_\rho[f]$ is given by
$$
\frac{\delta \mathcal F_\rho[f]}{\delta f}= -r\frac{\rho(x)}{f(x)^{r+1}},
$$
the equation \eqref{eq:vfd} can be interpreted as the gradient flow of the functional $\mc{F}_{\rho}[f]$ in the $2$-Wasserstein metric:
\be
\label{eq:eulerian GF}
\pt_tf(t,x)=-\operatorname{grad}_W \mathcal F_\rho[f(t)]= \pt_x\bigg(f(t,x)\pt_x\(\frac{\delta \mathcal F_\rho[f(t)]}{\delta f}\) \bigg).
\ee

Now, given a periodic probability density $f:[0,1]\to (0,\infty)$ of class $C^2$, let the function $\delta f$ satisfy \eqref{eq:int0}, and let $\phi$ be related to $\delta f$ by \eqref{delta f}.

We compute the first derivative of $\mathcal F_\rho[f].$ Using that
$$
\frac{\pt_x f}{f^{r+1}}=-\frac{1}{r}\pt_x\(\frac{1}{f^r}\),
$$
we have:
\begin{align*}
\La\frac{\delta \mathcal F_\rho[f]}{ \delta f},\delta f\Ra&=-r\int_0^1 \frac{\rho}{f^{r+1}}\,\delta f\,dx\overset{\eqref{delta f}}{=}-r\int_0^1 \frac{\rho}{f^{r+1}}\pt_x (f \pt_x\phi)\,dx\\
&=-r\int_0^1 \frac{\rho}{f^r}\pt_{xx}\phi\,dx-r\int_0^1 \frac{\rho}{f^{r+1}}\pt_x f\, \pt_{x}\phi\,dx\\
&=-r\int_0^1 \frac{\rho}{f^r}\pt_{xx}\phi\,dx+\int_0^1 \rho\, \pt_x\(\frac{1}{f^{r}}\)\pt_{x}\phi\,dx\\
&=-\int_0^1 \frac{\pt_x\rho}{f^r}\pt_{x}\phi\,dx-(r+1)\int_0^1 \frac{\rho}{f^r}\pt_{xx}\phi\,dx.
\end{align*}
Now, to compute the Hessian of $\mathcal F_\rho$, we consider a geodesic $f:[0,1]\to \mathcal M$ such that $f(0)=f$.
Then the Hessian of $\mathcal F_\rho$ at $f$ is computed by considering 
$$
\frac{d^2}{d^2s}\Big|_{s=0}\mathcal F_\rho[f(s)].
$$
Recall that the geodesic equation is given by the system
\be\label{eq: var f}
\pt_s f-\pt_x (f \pt_x\phi)=0
\ee
\be\label{eq:HJ}
\pt_s \phi-\frac{1}{2}|\pt_x\phi|^2=0,
\ee
(see for instance \cite[Sections 2 and 3.2]{OW})
and that, with this notation,
\begin{align*}
\frac{d}{ds}\mathcal F_\rho[f(s)]&=\La\frac{\delta \mathcal F_\rho[f(s)]}{ \delta f}, \delta f(s)\Ra\\
&=-\int_0^1 \frac{\pt_x\rho}{f^r}\pt_{x}\phi\,dx-(r+1)\int_0^1 \frac{\rho}{f^r}\pt_{xx}\phi\,dx,
\end{align*}
where $\delta f(s)$ is related to $\phi(s)$ by \eqref{delta f}.

We now compute the second derivative of $\mathcal F_\rho[f(s)]$:
\begin{align*}
\frac{d^2}{d^2s}\Big|_{s=0}\mathcal F_\rho[f(s)]&=\frac{d}{ds}\biggl(-\int_0^1 \frac{\pt_x\rho}{f^r}\pt_{x}\phi\,dx-(r+1)\int_0^1 \frac{\rho}{f^r}\pt_{xx}\phi\,dx,\biggr)\\
&=r\int_0^1 \frac{\pt_x\rho}{f^{r+1}}\pt_sf\,\pt_x \phi\,dx-\int_0^1 \frac{\pt_x\rho}{f^r}\pt_x (\pt_s\phi )\,dx\\
&+ r(r+1) \int_0^1\frac{\rho}{f^{r+1}}\pt_sf\,\pt_{xx} \phi\,dx-(r+1)\int_0^1 \frac{\rho}{f^r}\pt_{xx} (\pt_s\phi )\,dx\\
&\overset{\eqref{eq: var f}+\eqref{eq:HJ}}{=}r\int_0^1 \frac{\pt_x\rho}{f^{r+1}}\pt_x (f \pt_x\phi)\pt_x \phi\,dx-\int_0^1\frac{\pt_x\rho}{f^r}\pt_x\(\frac{1}{2}|\pt_x\phi|^2 \)\,dx\\
&+ r(r+1) \int_0^1\frac{\rho}{f^{r+1}}\pt_x (f \pt_x\phi)\pt_{xx} \phi\,dx-(r+1)\int_0^1 \frac{\rho}{f^r}\pt_{xx}\(\frac{1}{2}|\pt_x\phi|^2 \)\,dx\\
&=r\int_0^1 \frac{\pt_x\rho}{f^{r+1}}\pt_x f (\pt_x \phi)^2\,dx+r\int_0^1 \frac{\pt_x\rho}{f^{r}}\pt_x\phi\,\pt_{xx} \phi\,dx\\
&-\int_0^1\frac{\pt_x\rho}{f^r}\pt_x\(\frac{1}{2}|\pt_x\phi|^2 \)\,dx+ r(r+1) \int_0^1\frac{\rho}{f^{r}}(\pt_{xx} \phi)^2\,dx\\
&+ r(r+1) \int_0^1\frac{\rho}{f^{r+1}}\pt_xf\,\pt_{x} \phi\,\pt_{xx} \phi\,dx-(r+1)\int_0^1 \frac{\rho}{f^r}\pt_{xx}\(\frac{1}{2}|\pt_x\phi|^2 \)\,dx.
\end{align*}
Using again that
$\frac{\pt_x f}{f^{r+1}}=-\frac{1}{r}\pt_x\(\frac{1}{f^r}\),$ and integrating by parts,
we get
\begin{align*}
\frac{d^2}{d^2s}\Big|_{s=0}\mathcal F_\rho[f(s)]&=
-\int_0^1 \pt_x\rho\,\pt_x\(\frac{1}{f^r}\)(\pt_x \phi)^2\,dx+r\int_0^1 \frac{\pt_x\rho}{f^{r}}\pt_x\phi\,\pt_{xx} \phi\,dx\\
&-\int_0^1\frac{\pt_x\rho}{f^r}\pt_x\(\frac{1}{2}|\pt_x\phi|^2 \)\,dx+ r(r+1) \int_0^1\frac{\rho}{f^{r}}(\pt_{xx} \phi)^2\,dx\\
&-(r+1) \int_0^1\rho\,\pt_x\(\frac{1}{f^r}\)\pt_x\phi\,\pt_{xx} \phi\,dx-(r+1)\int_0^1 \frac{\rho}{f^r}\pt_{xx}\(\frac{1}{2}|\pt_x\phi|^2 \)\,dx\\
&=\int_0^1 \frac{\pt_{xx}\rho}{f^r}(\pt_{x} \phi)^2\,dx+\int_0^1 \frac{\pt_{x}\rho}{f^r}\pt_x\big((\pt_{x} \phi)^2\big)\,dx\\
&+r\int_0^1 \frac{\pt_x\rho}{f^{r}}\pt_x\phi\,\pt_{xx} \phi\,dx-\int_0^1\frac{\pt_x\rho}{f^r}\pt_x\(\frac{1}{2}|\pt_x\phi|^2 \)\,dx\\
&+ r(r+1) \int_0^1\frac{\rho}{f^{r}}(\pt_{xx} \phi)^2\,dx
+(r+1)\int_0^1 \frac{\pt_{x}\rho}{f^r}\pt_x \phi\,\pt_{xx}\phi\,dx\\
&+(r+1) \int_0^1\frac{\rho}{f^r}\pt_x\(\pt_x\phi\,\pt_{xx} \phi\)\,dx
-(r+1)\int_0^1 \frac{\rho}{f^r}\pt_{xx}\(\frac{1}{2}|\pt_x\phi|^2 \)\,dx\\
&=2(r+1)\int_0^1 \frac{\pt_{x}\rho}{f^r}\pt_x \phi\,\pt_{xx}\phi\,dx+(r+1)^2 \int_0^1\frac{\rho}{f^{r}}(\pt_{xx} \phi)^2\,dx\\
&+\int_0^1 \frac{\pt_{xx}\rho}{f^r}(\pt_{x} \phi)^2\,dx-(r+1)\int_0^1 \frac{\rho}{f^{r}}\Bigg[-\pt_x\phi\, \pt_{xxx}\phi+\pt_{xx}\(\frac{1}{2}|\pt_x\phi|^2 \) \Bigg]\,dx.
\end{align*}
We now notice that
$$
-\pt_x\phi \,\pt_{xxx}\phi+\pt_{xx}\(\frac{1}{2}|\pt_x\phi|^2 \)=(\pt_{xx} \phi)^2,
$$
so we get
\begin{multline*}
\frac{d^2}{d^2s}\Big|_{s=0}\mathcal F_\rho[f(s)]=
2(r+1)\int_0^1 \frac{\pt_{x}\rho}{f^r}\pt_x \phi\pt_{xx}\phi\,dx\\
+r(r+1) \int_0^1\frac{\rho}{f^{r}}(\pt_{xx} \phi)^2\,dx
+\int_0^1 \frac{\pt_{xx}\rho}{f^r}(\pt_{x} \phi)^2\,dx.
\end{multline*}
We now want to investigate the $\mu$-convexity of the functional $\mathcal F_\rho$
in terms of the assumptions on $\rho$ and $f$.

Assume that $\rho$ is a periodic probability density of class $C^{2,\alpha}$ with $\|\rho'\|_\infty \leq \eta_1,$ and $\|\rho''\|_\infty \leq \eta_2.$ We assume also that $0<\lambda \leq \rho \leq 1/\lambda$,
and that $0<a\leq f \leq A$.
Then
\begin{multline*}
\frac{d^2}{d^2s}\Big|_{s=0}\mathcal F_\rho[f(s)]\ge
-\frac{2\eta_1(r+1)}{a^r}\int_0^1 |\pt_x\phi| |\pt_{xx}\phi|\,dx\\
+\frac{r(r+1) \lambda}{A^r} \int_0^1(\pt_{xx} \phi)^2\,dx
-\frac{\eta_2}{a^r}\int_0^1 (\pt_{x} \phi)^2\,dx.
\end{multline*}
By Young inequality we have, for any $\e>0$,
\begin{multline*}
\frac{d^2}{d^2s}\Big|_{s=0}\mathcal F_\rho[f(s)]\ge
-\frac{\eta_1(r+1)}{\e a^r}\int_0^1 |\pt_x\phi|^2\,dx -\frac{\e \eta_1(r+1)}{a^r}\int_0^1 |\pt_{xx}\phi|^2\,dx\\
+\frac{r(r+1) \lambda}{A^r} \int_0^1(\pt_{xx} \phi)^2\,dx-\frac{\eta_2}{a^r}\int_0^1(\pt_{x} \phi)^2\,dx.
\end{multline*}
Choosing $\e=\frac{r\lambda a^r}{2\eta_1 A^r}$, we get
\begin{multline*}
\frac{d^2}{d^2s}\Big|_{s=0}\mathcal F_\rho[f(s)]\ge
\frac{r(r+1) \lambda}{2A^r}\int_0^1|\pt_{xx}\phi|^2\,dx
-\biggl(\frac{2\eta_1^2(r+1)A^r}{r\lambda a^{2r} }+\frac{\eta_2}{a^r} \biggr) \int_0^1 |\pt_x\phi|^2\,dx
\end{multline*}
Using Poincar\'e inequality on $[0,1]$ (recalling that the Poincar\'e constant is $1/2$), we obtain
\begin{align*}
\frac{d^2}{d^2s}\Big|_{s=0}\mathcal F_\rho[f(s)]&\ge
\biggl( \frac{r(r+1) \lambda}{A^r} -\frac{2\eta_1^2(r+1)A^r}{r\lambda a^{2r} }-\frac{\eta_2}{a^r}\biggr) \frac{1}{A}\int_0^1f|\pt_{x}\phi|^2\,dx\\
&\geq \mu\int_0^1f|\pt_{x}\phi|^2\,dx,
\end{align*}
where
$$
\mu:=\frac{1}{A}\biggl(\frac{r(r+1) \lambda}{A^r} -\frac{2\eta_1^2(r+1)A^r}{r\lambda a^{2r} }-\frac{\eta_2}{a^r}\biggr).
$$
This proves that the Hessian of $\mathcal F_\rho$ at $f$ is bounded from below by $\mu$.

\subsection{Application to stability of solutions to \eqref{eq:vfd}}
As we shall explain in the next section,
to ensure that the above convexity results can be applied to equation \eqref{eq:vfd}, one needs to know that if $f_1(t,x), f_2(t,x)$
are solutions of \eqref{eq:vfd}, and if 
$$
[0,1]\ni s\mapsto f^s(t,x)
$$
is a Wasserstein geodesic such that $f^0(t,x)=f_1(t,x)$ and $f^1(t,x)=f_2(t,x)$, then there exist constants $a,A>0$ such that
$$
0<a\leq f^s(t,x) \leq A\qquad \forall\,s \in [0,1],\, \forall\,t\geq 0,\, \forall\, x.
$$
Thanks to Theorem \ref{thm:1}, we know that the above bounds hold at $s=0,1$, for all $t,x$.

We now fix $t \geq 0$ and consider $s\mapsto f^s$ the geodesic connecting $ f_1(t)$ to $f_2(t)$ on
$(\mc M, W_2).$ 

The goal is to show that
$$
\Hess_{W_2}\mc F_\r[f^s] \geq \mu \quad \mbox{for all}\ s\in[0,1],
$$
and as explained above,
to prove this result it is enough to prove the following implication:
\be
\label{eq:bounds per f^s}
a\leq f_1(t),f_2(t) \leq  A\quad
 \Longrightarrow   \quad a \leq f^s \leq  A\quad \mbox{for all} \ s \in [0,1].
\ee
Let $T$ be the optimal transport map from $f_1(t)$ to $f_2(t).$ By definition $f^s$ is given by
$$
(T_s)_\# f_1(t) = f^s \quad  \mbox{where}\ \  T_s(x)=(1-s)x+s T(x).
$$   
By definition of push-forward we have
\be
\label{eq:PF T_s}
T_s' = \frac{f_1(t)}{f_s\circ T_s}
\ee
and
\be
\label{eq:PF T}
T'=\frac{f_1(t)}{f_2(t)\circ T }.
\ee
Let us prove \eqref{eq:bounds per f^s}. By \eqref{eq:PF T_s} and \eqref{eq:PF T} we have:
\begin{multline*}
f^s\circ T_s=\frac{f_1(t)}{T_s'}=\frac{f_1(t)}{sT'+(1-s)}\\
=\frac{f_1}{\frac{sf_1(t)+(1-s)f_2(t)\circ T}{f_2(t)\circ T} }=\frac{f_1(t)\,f_2(t)\circ T}{sf_1(t)+(1-s)f_2(t)\circ T }.
\end{multline*}
Noticing that
$$
\min\{f_1(t);f_2(t)\circ T\}\le \frac{f_1(t)\,f_2(t)\circ T}{sf_1(t)+(1-s)f_2(t)\circ T }\le \max\{f_1(t);f_2(t)\circ T\}
$$
we obtain the validity of \eqref{eq:bounds per f^s}.
In the next subsection,
we briefly summarize the general consequences of $\mu$-convexity and we conclude the proof of Theorem \ref{thm:2}.

\subsection{$W_2$-stability}
In this section we use Otto's formalism to deduce convergence and stability of solutions.
Although these computations are formal, we present them as they show in a very elegant way why convexity of $\mathcal F$
implies such stability.
For a rigorous proof, the reader may look at the paper \cite[Section 4]{OW}.

Recall that, formally, our equation \eqref{eq:vfd} can be written as 
\be
\label{eq:GFpde}
\dot f=-\nabla_{W_2}\mc F_\r[f],
\ee
where
$$
\nabla_{W_2}\mc F_\r[f]=r\Div\(f\nabla \(\frac{\rho}{f^{r+1}}\)\).
$$
Now, given two solutions $f_1$ and $f_2$ as in the statement of the theorem, 
and denoting by $f^s$ the geodesic connecting them, we compute
\begin{align*}
\frac{d}{dt}\frac{W_2(f_1,f_2)^2}{2} &= g_{f_1}\Big(\dot f_1, \pt_s f^s\big|_{s=0} \Big)
-g_{f_2}\Big(\dot f_2, \pt_s f^s\big|_{s=1} \Big)\\
&=-g_{f_1}\Big(\nabla_{W_2}\mc F_\r[f_1], \pt_s f^s\big|_{s=0} \Big)
+g_{f_2}\Big(\nabla_{W_2}\mc F_\r[f_2], \pt_s f^s\big|_{s=1} \Big)
\end{align*}
Now, since $f^s$ is a geodesic,
$$
\frac{d}{ds}  g_{f^s}\Big(\nabla_{W_2}\mc F_\r[f^s], \pt_s f^s \Big)=g_{f^s}\bigg(\Hess_{W_2}\mc F_\r [f^s]\,\pt_s f^s,\pt_s f^s\bigg).
$$
Thus
\begin{align*}
-g_{f_1}\Big(\nabla_{W_2}\mc F_\r[f_1], \pt_s f^s\big|_{s=0} \Big)&
+g_{f_2}\Big(\nabla_{W_2}\mc F_\r[f_2], \pt_s f^s\big|_{s=1} \Big)\\
&=-\int_0^1g_{f^s}\bigg(\Hess_{W_2}\mc F_\r [f^s]\,\pt_s f^s,\pt_s f^s\bigg)\,ds\\
&\leq -\mu \int_0^1g_{f^s}(\pt_s f^s,\pt_s f^s)\,ds=-\mu\,W_2(f_1,f_2)^2,
\end{align*}
where in the last inequality we used again that $f^s$ is a geodesic.
Hence, combining these two equations we get
$$
\frac{d}{dt}\frac{W_2(f_1,f_2)^2}{2} \leq -2\mu \,\frac{W_2(f_1,f_2)^2}{2},
$$
which gives the result.

\bigskip

{\it Acknowledgments:}   The author is grateful to Prof. Jos\'e Antonio Carrillo and to Francesco Saverio Patacchini for useful comments and for carefully reading this manuscript; and to Prof. Matteo Bonforte for suggesting references on very fast diffusion equations.

The author would also like to acknowledge the L'Or\'eal Foundation for partially supporting this project by awarding the L'Or\'eal-UNESCO \emph{For Women in Science France fellowship}.

\end{document}